\newtheorem{thm}{Theorem}[section]
\newtheorem{lem}[thm]{Lemma}
\newtheorem{cor}[thm]{Corollary}
\newtheorem{dfn}[thm]{Definition}
\newtheorem{rem}[thm]{Remark}
\begin{document}

\title{Linear multistep methods with repeated  global Richardson extrapolation}

\author{I.~Fekete\thanks{{\texttt{imre.fekete@ttk.elte.hu}}, corresponding author, Department of Network and Data Science, Central European University, Quellenstraße 51, 1100 Vienna, Austria and Department of Applied Analysis and Computational Mathematics, ELTE E\"otv\"os Lor\'and University, Pázmány P. s. 1/c, H-1117 Budapest, Hungary}, L.~L\'oczi\thanks{{\texttt{LLoczi@inf.elte.hu}}, Department of Numerical Analysis, ELTE E\"otv\"os Lor\'and University, P\'azm\'any P.~s.~1/c, H-1117 Budapest, Hungary, and Department of Analysis and Operations Research, BME Budapest University of Technology and Economics}}
\date{\today}

\maketitle

\begin{abstract}
In this work, we further investigate the application of the well-known Richardson extrapolation (RE) technique to accelerate the convergence of sequences resulting from linear multistep methods (LMMs) for numerically solving initial-value problems of systems of ordinary differential equations. By extending the ideas of our recent work on global
Richardson extrapolation,  
we now utilize some advanced versions of RE in the form of repeated RE (RRE). 
Assume that the underlying LMM---the base method---has order $p$ and RE is applied $\ell$ times. Then, we prove that the accelerated sequence has convergence order $p+\ell$. 
The version we present here is global RE (GRE, also known as passive RE), since the terms of the linear combinations are calculated independently. 
Thus, the resulting higher-order LMM-RGRE methods can be implemented in a parallel fashion and existing LMM codes can directly be used without any modification. We also investigate how the linear stability properties of the base method (e.g., $A$- or $A(\alpha)$-stability) are preserved by the LMM-RGRE methods. 
\end{abstract}

\noindent \textbf{Keywords:} linear multistep methods; Richardson extrapolation; Adams--Bashforth methods; Adams--Moulton methods; BDF methods; convergence; region of absolute stability\\
\noindent \textbf{Mathematics Subject Classification (2020)}: 65L05, 65L06 

\section{Introduction}

Let us consider the initial-value problem
\begin{equation}\label{ODE}
y'(t)=f(t,y(t)), \quad y(t_0)=y_0,
\end{equation}
where $f:\mathbb{R}\times\mathbb{R}^m\to\mathbb{R}^m$ ($m\in\mathbb{N}^+$) is a given sufficiently smooth function. To approximate its unique solution $y$ on an interval $[t_0,t_{\text{final}}]$, one often applies a one-step or a linear multistep time-discretization method, resulting in a numerical sequence.

Classical Richardson extrapolation (RE) \cite{richardson1911, richardson1927} is a  technique to accelerate the convergence of numerical sequences depending on a small parameter by eliminating the lowest order error term(s) from the corresponding asymptotic expansion. When solving  \eqref{ODE} numerically, the 
parameter in RE can be chosen as the discretization step size $h>0$. The application of RE to sequences generated by one-step (e.g., Runge--Kutta) methods is described, for example, in \cite{butcher, hairernorsettwanner}. In \cite{zlatev}, global (also known as passive) and local (or active) versions of RE are implemented with Runge--Kutta sequences. These combined methods find application in many areas (see, e.g., \cite{falgout2021, zlatev2022}).

When carrying out global RE (GRE), one considers a suitable linear combination of two approximations, one generated on a coarser grid and one on a finer grid, to obtain a better approximation of the solution $y$ of \eqref{ODE}. This extrapolation is called global, because the sequences on the two grids are computed independently and their linear combination is formed only in the last step. Taking this idea further, one can consider several approximations on finer and finer grids to improve convergence even more. Such a procedure is called repeated global  Richardson extrapolation (RGRE). To describe RGRE, let us fix a basic step size (or grid length) $h$, some $\ell\in\mathbb{N}^+$ (denoting the number of RE applications), and
 a strictly increasing positive integer step-number sequence $1=n_1<n_2<\ldots<n_{\ell+1}$, then  consider the $\ell+1$ nested grids with grid lengths 
\[\frac{h}{n_1}, \frac{h}{n_2}, \ldots, \frac{h}{n_{\ell+1}},\] together with 
 the corresponding time-discretization sequences 
 \[y_{n_1\cdot n}\left(\frac{h}{n_1}\right), y_{n_2\cdot n}\left(\frac{h}{n_2}\right), \ldots,  y_{n_{\ell+1}\cdot n}\left(\frac{h}{n_{\ell+1}}\right).\] 
Let us denote by $r_n^{[\ell]}(h)$ the appropriately chosen linear combination of the above approximations (to be explained soon). For example, when the sequence $y_n(h)$ is generated by a one-step method of order $p$ applied to \eqref{ODE},  classical RE is recovered by choosing 
$\ell=1$, $(n_1,n_2)=(1,2)$ and 
\begin{equation}\label{egyszeresRE}
r_n^{[1]}(h):=\frac{2^p \cdot y_{2n}\left(\frac{h}{2}\right)- y_n(h)}{2^p-1}.
\end{equation}
In \cite{ZDFGH}, the authors present the formulae $r_n^{[\ell]}(h)$  for several larger values of $\ell$ corresponding to the step-number sequence $n_j=2^{j-1}$ ($j\ge 1$); here we just reproduce two more cases:


\begin{equation}\label{ketszeresRE}
r_n^{[2]}(h):=\frac{2^{2p+1}\cdot y_{4n}\left(\frac{h}{4}\right)-3\cdot 2^p\cdot y_{2n}\left(\frac{h}{2}\right)+ y_n(h)}{\left(2^p-1\right) \left(2^{p+1}-1\right)}
\end{equation}

\begin{equation}\label{haromszorosRE}
r_n^{[3]}(h):=\frac{2^{3p+3}\cdot y_{8n}\left(\frac{h}{8}\right)-7\cdot 2^{2p+1}\cdot y_{4n}\left(\frac{h}{4}\right)+7\cdot 2^{p}\cdot y_{2n}\left(\frac{h}{2}\right)- y_n(h)}{\left(2^p-1\right) \left(2^{p+1}-1\right)\left(2^{p+2}-1\right)}.
\end{equation}

\begin{rem}
    Note the slight difference in the  terminology: in \cite{ZDFGH}, $q$ denotes the number of RE repetitions, while here $\ell$ denotes the number of RE applications. In other words,  $q=\ell-1$, and classical RE corresponds to $\ell=1$ or $q=0$. 
\end{rem}

\begin{rem}\label{rem:ismeteltRE} 
In \cite{BH}, the authors describe multiple RE, which is 
another advanced version of Richardson extrapolation besides repeated RE. Based on \eqref{egyszeresRE}, multiple RE is defined as
\[
r_n^{\mathrm{multiple}}(h):=\frac{2^{p+1} \cdot r_{2n}^{[1]}\left(\frac{h}{2}\right)- r_n^{[1]}(h)}{2^{p+1}-1}.
\]
This formula is identical to $r_n^{[2]}(h)$ in \eqref{ketszeresRE}. (The context in \cite{BH} is, however, different, since they deal with \emph{local} RE applied to Runge--Kutta methods as underlying methods.)
\end{rem}

Let us briefly recall the construction of  formulae \eqref{egyszeresRE}--\eqref{haromszorosRE}. As mentioned earlier, extrapolation techniques are based on the existence of asymptotic expansions of the global discretization error with respect to the parameter $h$. 
One such classical theorem for one-step methods  (of order $p$) is due to Gragg (1964); for a modern treatment, see, e.g., \cite[Section II.8, Theorem 8.1]{hairernorsettwanner}. Assuming that $f$ in \eqref{ODE} is smooth enough, let us display, say, the first three terms of an asymptotic expansion. The theorem  then guarantees the existence of some functions $\mathbf{e}_{p}$, $\mathbf{e}_{p+1}$,$\mathbf{e}_{p+2}$ and $\mathbf{E}$ such that 
\begin{equation}\label{globerror}y_n(h)-y(t^*)=\mathbf{e}_p(t^*)\cdot h^p+
\mathbf{e}_{p+1}(t^*)\cdot h^{p+1}+\mathbf{e}_{p+2}(t^*)\cdot h^{p+2}
+\mathbf{E}(t^*,h)h^{p+3},
\end{equation}
where, given any  $\delta>0$ small enough, the function $\mathbf{E}$ is uniformly bounded for any $h\in[0,h_0]$ and any $t^*\in [t_0+\delta,t_{\text{final}}]$. (Clearly, for fixed $t^*$, we can choose $\delta$ such that $t^*\in[t_0+\delta,t_{\text{final}}]$.) The linear combinations in the definition of the sequences \eqref{egyszeresRE}--\eqref{haromszorosRE} have been set up such that the first, first two, and the first three terms, respectively, on the right-hand side of \eqref{globerror} are eliminated. More precisely,  to construct the coefficients of $\eqref{ketszeresRE}$, for example, one considers the expression 
\[
\gamma_1\cdot(y_n(h)-y(t^*))+\gamma_2\cdot\left(y_{2n}\left(\frac{h}{2}\right)-y(t^*)\right)+\gamma_3\cdot\left(y_{4n}\left(\frac{h}{4}\right)-y(t^*)\right),
\]
then applies \eqref{globerror} with $h$, $h/2$ and $h/4$, and 
solves the linear system for $\gamma_{1,2,3}$ obtained by setting $\gamma_1+\gamma_2+\gamma_3$ equal to 1, and the coefficients of $h^p$ and $h^{p+1}$ equal to 0. In other words,
$\gamma_{1,2,3}$ is the solution of the system
\[
\left(
\begin{array}{ccc}
 1 & 1 & 1 \\
 1 & 2^{-p} & 4^{-p} \\
 1 & 2^{-p-1} & 4^{-p-1} \\
\end{array}
\right) 
\left(
\begin{array}{c}
 \gamma_1 \\
 \gamma_2 \\
 \gamma_3 \\
\end{array}
\right)=\left(
\begin{array}{c}
 1 \\
 0 \\
 0 \\
\end{array}
\right).
\]
The above linear system possesses a unique solution, because matrices of the form
\[
\left(
\begin{array}{cccc}
 a_1^{\rho _1} & a_2^{\rho _1} & \cdots  & a_n^{\rho _1} \\
 a_1^{\rho _2} & a_2^{\rho _2} & \cdots  & a_n^{\rho _2} \\
 \vdots  & \vdots  &    & \vdots  \\
 a_1^{\rho _n} & a_2^{\rho _n} & \cdots  & a_n^{\rho _n} \\
\end{array}
\right)\in\mathbb{R}^{n\times n}
\]
are always invertible, whenever the positive real bases $a_j>0$ ($1\le j\le n$) are distinct, and the real exponents $\rho_j\in\mathbb{R}$ ($1\le j\le n$) are also distinct. This is a classical result of Laguerre (1883), see \cite[Section II.9, Theorem 9.1]{hairernorsettwanner} and 
\cite[Section II.9, Exercise 6]{hairernorsettwanner}. 
\begin{rem}
This result affirmatively answers \cite[Conjecture 1]{ZDFGH}, guaranteeing the existence of formulae $r_n^{[\ell]}(h)$ for \emph{any} number of RE applications $\ell\in\mathbb{N}^+$, moreover, not only for the step-number sequence $(1,2,4,\ldots,2^{\ell+1})$, but in fact for any positive integer step-number sequence $1=n_1<n_2<\ldots<n_{\ell+1}$.
\end{rem}

\noindent By following the approach outlined above, we present two more examples, corresponding to the ``most economic'' step-number sequence $(n_1,n_2,n_3,n_4)=(1,2,3,4)$:

\[
r_n^{[2]}(h):=\frac{3^{p+1}\cdot y_{3n}\left(\frac{h}{3}\right)- 2^{p+2}\cdot y_{2n}\left(\frac{h}{2}\right)+ y_n(h)}{3^{p+1}-2^{p+2}+1}
\]

\[
r_n^{[3]}(h):=\frac{4^{p+2}\cdot y_{4n}\left(\frac{h}{4}\right)-3^{p+3}\cdot y_{3n}\left(\frac{h}{3}\right)+3\cdot 2^{p+2}\cdot y_{2n}\left(\frac{h}{2}\right)- y_n(h)}{4^{p+2}-3^{p+3}+3\cdot 2^{p+2}-1}.
\]

It is now natural to ask what happens if we enter the realm of linear multistep methods (LMMs) as underlying numerical discretizations for \eqref{ODE}. Suppose we approximate the unique solution $y$ of \eqref{ODE} on $[t_0,t_{\text{final}}]$ by applying a $k$-step LMM 
\begin{equation}\label{multistepdefn}
\sum_{j=0}^k \alpha_j y_{n+j}=\sum_{j=0}^k h \beta_j f_{n+j}
\end{equation}
of order $p\ge 1$ on a uniform grid $\{t_n\}$ with step size $h=t_{n+1}-t_n>0$,  $f_m:=f(t_m, y_m)$, and the numbers $\alpha_j\in\mathbb{R}$ and $\beta_j\in \mathbb{R}$ ($j=0, \ldots, k$) being the given method coefficients with $\alpha_k\ne 0$. 
The LMM \eqref{multistepdefn} generates a sequence $y_n(h)$ which is supposed to approximate the exact solution at $t_n$, that is, $y_n(h)\approx y(t_n)$. In this work, this LMM will also be referred to as the base method or the underlying method.

Results in this direction include, e.g.,  \cite{D1} or \cite{D2}. In \cite{D1}, a special class of LMMs is developed possessing a certain type of asymptotic expansion of the global discretization error so that extrapolation techniques become applicable to improve their convergence order. In \cite{D2}, extrapolation techniques based on one-step methods and some LMMs (including the explicit midpoint rule or the implicit trapezoidal rule) have been thoroughly analyzed. 

In our recent work \cite{feketeloczi}, we have shown that the sequence $r_n^{[1]}(h)$ given by \eqref{egyszeresRE} converges to the solution of \eqref{ODE} (with $f$ assumed to be sufficiently smooth) if the component sequences $y_{2n}$ and $y_n$ are generated by a LMM \eqref{multistepdefn} of order $p\ge 1$: the order of convergence of $r_n^{[1]}(h)$ is at least $p+1$. The linear stability properties of the resulting method (i.e., LMM with $\ell=1$ GRE application) are also analyzed. In the convergence proof, we have utilized \cite[Section 6.3.4, Theorem 6.3.5]{gautschi}, which proves the existence of the \textit{first term} of an asymptotic expansion of the global discretization error. For that theorem to hold, one assumes that the $k$ starting values of the sequence $\{y_0(h),y_1(h),\ldots,y_{k-1}(h)\}$ to initiate the LMM  are each ${\cal{O}}(h^{p+1})$-close to the corresponding exact solution values. There is no assumption on the LMM itself (apart from having order $p$).

In the present work, we extend these ideas and prove that if the base LMM \eqref{multistepdefn} (satisfying  assumptions \boxed{a2}--\boxed{a4} listed in Theorem \ref{convergencethm} in Section \ref{secconv}) is of order $p$, and global Richardson extrapolation is applied $\ell$ times (for any $\ell\ge 1$),  then the sequences $r_n^{[\ell]}(h)$ converge to the solution of \eqref{ODE} with order of convergence $p+\ell$. We can refer to such  procedure as LMM-RGRE, or, when the number of RE applications is made explicit, LMM-$\ell$GRE. In order to prove convergence of $r_n^{[\ell]}(h)$ for larger values of $\ell$, one needs to guarantee the existence of \textit{more terms} in an  
asymptotic expansion of the global error. Here, 
we utilize \cite[Section III.9, Theorem 9.1]{hairernorsettwanner}, which is a fairly general theorem about the existence of several terms of an  asymptotic expansion of the global error for strictly stable general linear methods (GLMs). Since LMMs are special cases of GLMs, Theorem 9.1 is applicable in the present situation. This explains the imposition of assumption \boxed{a4} on the LMMs---LMMs satisfying  \boxed{a4} are also known as strongly (or strictly) stable LMMs. On the other hand, it turns out that it is enough to assume a weaker closeness relation   \boxed{a3}, that is, the starting values to initiate the LMM are only ${\cal{O}}(h^{p})$-close to the corresponding exact solution values.

The structure of our paper is as follows. Section \ref{sec:notation} summarizes some notation. In Section \ref{secconv}, the improved convergence rate of LMM-RGREs is proved, see Theorem \ref{convergencethm}. Possible classes of base LMMs include Adams--Bashforth, Adams--Moulton or BDF methods. Linear stability of LMM-RGREs is investigated in Section \ref{seclinstab}. Finally,  numerical tests are presented in Section \ref{secnumerics} to demonstrate the expected convergence order and efficiency.

\subsection{Notation}\label{sec:notation}
We assume throughout this work that $0\in\mathbb{N}$. The Kronecker product of two matrices is denoted by $\otimes$ (for its definition and properties, see, e.g.,  \cite{matrixmathematics}). For a (complex) square matrix $A$, let $\mathfrak{M}_A$ denote its minimal polynomial (that is, the unique univariate polynomial with leading coefficient equal to 1 and having the least degree such that $\mathfrak{M}_A(A)$ is the zero matrix).
For a set $S\subset\mathbb{C}$ and for $a>0$, we define $a\, S:=\{a z:z\in S\}$.
For $2\le k\in\mathbb{N}$, AB$k$, AM$k$ and BDF$k$ denote, respectively, the $k$-step Adams--Bashforth, Adams--Moulton, and BDF methods.







\section{Convergence analysis}\label{secconv}

It is known---\cite[Section III.4]{hairernorsettwanner} and \cite[Section III.8, Example 8.2]{hairernorsettwanner}---that any LMM  can be interpreted as a GLM written as a one-step method in a higher dimensional space. To this end, given the $\alpha$-coefficients of \eqref{multistepdefn}, define the matrix 
\begin{equation}\label{Amatrixdef}
    A:=\left(
\begin{array}{ccccc}
 -\alpha_{k-1}/\alpha_k  & -\alpha_{k-2}/\alpha_k  & \ldots  & -\alpha_{1}/\alpha_k  & -\alpha_{0}/\alpha_k  \\
 1  & 0  & \ldots  & 0  & 0  \\
 0  & 1  & \ldots  & 0  & 0  \\
 0  & 0  & \ddots  & \vdots  & \vdots  \\
 0  & 0  & \ldots  & 1  & 0  \\
\end{array}
\right)\in\mathbb{R}^{k\times k}
\end{equation}
so that its lower left block is the $(k-1)\times(k-1)$ identity matrix, then consider the recursion
\begin{equation}\label{onestepreformulation}
    Y_{n+1}=(A\otimes I)Y_n+h\Phi(t_n,Y_n,h)\quad\quad (n\in\mathbb{N}),
\end{equation}
where $Y_n:=(y_{n+k-1},y_{n+k-2},\ldots, y_n)^\top$ ($n\in\mathbb{N}$), $\Phi$ is the increment function of the numerical method (defined by \cite[Section III.4, formula (4.8a)]{hairernorsettwanner}, its particular structure will not be relevant), 
$I\in\mathbb{R}^{m \times m}$ is the identity matrix (recall that $m$ is the dimension of \eqref{ODE}, hence $A\otimes I\in\mathbb{R}^{(m\cdot k)\times (m\cdot k)}$).\\ 

Let us first recall \cite[Section III.9, Theorem 9.1]{hairernorsettwanner}---in a simplified form to minimize technicalities but highlight its relevance in the present context. The matrix $S$ appearing in the theorem below is 
$S:=A\otimes I$.

\begin{thm}[Hairer \& Lubich, 1984]\label{hairerlubich} Assume that a given GLM is consistent of order $p\ge 1$ and 
satisfies assumptions A1--A3:
\begin{itemize}
\item[A1] the GLM is strictly stable, that is, 
\begin{enumerate}
    \item[(i)] the matrix $S$ is power bounded, in other words, $\sup_{n\in\mathbb{N}}\|S^n\|\in\mathbb{R}$;
    \item[(ii)] 1 is the only eigenvalue of $S$ with modulus one;
\end{enumerate}
\item[A2] the functions $\alpha_n$ and $\beta_n$ in the definition of the increment function $\Phi$ are polynomials whose coefficients satisfy a certain exponential decay property;
\item[A3] the starting procedure, the correct-value function, and the increment function are sufficiently smooth.
\end{itemize}
Then the global discretization error has an asymptotic expansion of the form
\[
\mathbf{e}_p(t^*)\cdot h^p+\mathbf{e}_{p+1}(t^*)\cdot h^{p+1}+\ldots+
\mathbf{e}_{N}(t^*)\cdot h^{N}+
\mathbf{E}(t^*,h)h^{N+1},
\]
where $\mathbf{E}$ is bounded
uniformly in $h\in[0, h_0]$ and for $t^*$ in compact intervals not containing $t_0$.
\end{thm}

It will be useful to recall also the following result \cite[Chapter 7, Theorem 2.2.5]{zdzislav}.

\begin{thm}\label{zdzislav}
A square matrix $S$ is power bounded if and only if 
\begin{itemize}
    \item[(ia)] each  zero of its minimal polynomial lies in the closed unit disk, and 
    \item[(ib)] (potential) zeros of its minimal polynomial on the unit circle have multiplicity 1.
\end{itemize}
\end{thm}

As a last preparatory step, we prove the following statement about Kronecker products.

\begin{lem}\label{kroneckerminimal}
Let $M\in\mathbb{C}^{k\times k}$ be any square matrix, and let $I\in\mathbb{R}^{m\times m}$ be the identity matrix ($k,m\in\mathbb{N}^+$). Then, the minimal polynomials of $M$ and $M\otimes I$ coincide.
\end{lem}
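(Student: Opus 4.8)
The plan is to exploit the fact that the map $M\mapsto M\otimes I$ is a unital ring homomorphism, so it commutes with the formation of polynomials and preserves (non)vanishing. First I would establish the key identity that for every univariate polynomial $q$ one has $q(M\otimes I)=q(M)\otimes I$. This follows from the mixed-product rule $(M\otimes I)(M\otimes I)=(MM)\otimes(II)=M^2\otimes I$, hence inductively $(M\otimes I)^j=M^j\otimes I$ for all $j\in\mathbb{N}$ (with $(M\otimes I)^0=I\otimes I=I$), together with the bilinearity of $\otimes$: writing $q(x)=\sum_j c_j x^j$ and summing gives $q(M\otimes I)=\sum_j c_j(M^j\otimes I)=\bigl(\sum_j c_j M^j\bigr)\otimes I=q(M)\otimes I$.

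Next I would record the elementary observation that for any $N\in\mathbb{C}^{k\times k}$, the Kronecker product $N\otimes I$ is the zero matrix if and only if $N$ is the zero matrix. Indeed, since $m\ge 1$ we have $I\ne 0$, and the entries of $N\otimes I$ are precisely the products of entries of $N$ with entries of $I$; a nonzero entry of $N$ therefore produces a nonzero entry of $N\otimes I$.

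Combining these two facts, for any polynomial $q$ we obtain the chain of equivalences $q(M\otimes I)=0\iff q(M)\otimes I=0\iff q(M)=0$. In other words, $M$ and $M\otimes I$ are annihilated by exactly the same set of polynomials. Since the minimal polynomial of a square matrix is, by definition, the unique monic polynomial of least degree annihilating it (equivalently, the monic generator of the ideal of all annihilating polynomials), the coincidence of these annihilator sets forces $\mathfrak{M}_M=\mathfrak{M}_{M\otimes I}$, which is the assertion.

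I do not expect a genuine obstacle here: the argument is essentially formal. The only point deserving a line of care is the verification of $q(M\otimes I)=q(M)\otimes I$, which rests on the mixed-product (bilinearity plus $(A\otimes B)(C\otimes D)=(AC)\otimes(BD)$) property of the Kronecker product cited in \cite{matrixmathematics}, and the remark that tensoring with a nonzero identity matrix does not kill any nonzero matrix.
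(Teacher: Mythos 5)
Your proof is correct, but it takes a genuinely different route from the paper's. The paper passes to a Jordan canonical form $M=TJT^{-1}$, uses that similar matrices share a minimal polynomial, reduces to $\mathfrak{M}_J=\mathfrak{M}_{J\otimes I}$ via the lcm of the minimal polynomials of the diagonal blocks, and finally computes $\mathfrak{M}_{J_j}$ and $\mathfrak{M}_{J_j\otimes I}$ explicitly for the two types of Jordan blocks. You instead observe that $N\mapsto N\otimes I$ is an injective unital algebra homomorphism, so $q(M\otimes I)=q(M)\otimes I$ for every polynomial $q$ and hence $q(M\otimes I)=0$ if and only if $q(M)=0$; the two matrices therefore have identical annihilating ideals and identical monic generators of least degree. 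Your argument is shorter, avoids the Jordan form entirely (so it works verbatim over any field, not just $\mathbb{C}$), and isolates the single structural fact that makes the lemma true. The paper's version is more concrete and displays the block structure that is also used elsewhere in its power-boundedness discussion, but as a proof of this lemma your approach is the more economical one; the only steps needing care --- the mixed-product identity $(M\otimes I)^j=M^j\otimes I$ and the fact that tensoring with a nonzero identity does not annihilate a nonzero matrix --- are exactly the ones you flag and justify.
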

\begin{proof}
In the proof, we denote identity matrices of appropriate size by the same symbol $I$. Consider a Jordan canonical form of $M=TJT^{-1}$. Then, one easily checks that $M\otimes I=(T\otimes I)(J\otimes I)(T\otimes I)^{-1}$. We know that if two matrices are similar, then their minimal polynomials coincide. So to prove the lemma, it is sufficient to show that $\mathfrak{M}_J=\mathfrak{M}_{J\otimes I}$. 

Let $J_1,\ldots,J_n$ denote the blocks of the block diagonal matrix $J$. Then, the blocks of the block diagonal matrix $J\otimes I$ are $J_1\otimes I,\ldots,J_n\otimes I$. It is easily seen that 
\[
\mathfrak{M}_J=\text{lcm}\left(\mathfrak{M}_{J_1},\mathfrak{M}_{J_2},\ldots,\mathfrak{M}_{J_n}\right), 
\]
where $\text{lcm}$ denotes the least common multiple of the given polynomials with leading coefficient set to $1$. Similarly, we have
\[
\mathfrak{M}_{J\otimes I}=\text{lcm}\left(\mathfrak{M}_{J_1\otimes I},\ldots,\mathfrak{M}_{J_n\otimes I}\right).
\]
Finally, we claim that $\mathfrak{M}_{J_j}=\mathfrak{M}_{J_j\otimes I}$ for any block $J_j\in\mathbb{R}^{s_j\times s_j}$ ($j=1,2,\ldots,n$). Indeed, let 
$\lambda_j$ be the eigenvalue corresponding to $J_j$, and $N$ the appropriate sized (nilpotent) square matrix with $1$s in its (first) superdiagonal and $0$s everywhere else. Then, for both possible types of Jordan blocks, we have
\begin{itemize}
    \item $J_j=\lambda_j I\Longrightarrow\mathfrak{M}_{J_j}(x)=x-\lambda_j=\mathfrak{M}_{J_j\otimes I}(x)$, 
    \item  $J_j=\lambda_j I+N \Longrightarrow\mathfrak{M}_{J_j}(x)=(x-\lambda_j)^{s_j}=\mathfrak{M}_{J_j\otimes I}(x)$,
\end{itemize}
completing the proof.
\end{proof}

Now we can present our main result establishing the improved convergence rates of LMM-$\ell$GREs. The fixed grid point $t^*:=t_0+n h$ is part of all of the grids. As always in this context, any implied constant in an ${\cal{O}}$ symbol is independent of $n$ and $h$.

\begin{thm}\label{convergencethm} Choose some 
$\ell\in\mathbb{N}^+$ and a positive integer step-number sequence $1=n_1<n_2<\ldots<n_{\ell+1}$ determining the grid lengths $\frac{h}{n_1}, \frac{h}{n_2}, \ldots, \frac{h}{n_{\ell+1}}$. Assume that
\begin{itemize}
    \item[\boxed{a1}] the right-hand side $f$ of the initial-value-problem \eqref{ODE} is sufficiently smooth;
    \item[\boxed{a2}] the LMM \eqref{multistepdefn} has order $p\ge 1$;
    \item[\boxed{a3}] for any $h\in[0,h_0]$, the $k$ starting values of the sequence \[\{y_0(h),y_1(h),\ldots,y_{k-1}(h)\}\] to initiate the LMM are each ${\cal{O}}(h^{p})$-close to the corresponding exact solution values
    \[\{y(t_0),y(t_0+h),\ldots,y(t_0+(k-1)h)\}\] of the initial-value-problem \eqref{ODE};
    \item[\boxed{a4}] the eigenvalues of  $A$ in \eqref{Amatrixdef} lie in the closed unit disk of the complex plane, the only eigenvalue with modulus 1 is 1, and its algebraic multiplicity is also 1.
\end{itemize}
Then, for any fixed grid point $t^*\in [t_0,t_{\emph{final}}]$ we have
\[r_n^{[\ell]}(h)-y(t^*)= {\cal{O}}(h^{p+\ell}).\]
\end{thm}
\begin{proof}
It is enough to show that our assumptions \boxed{a1}--\boxed{a4} imply the assumptions of Theorem \ref{hairerlubich}.
\noindent{\textbf{Step 1.}} Assumption \boxed{a1} implies the smoothness assumption A3 of Theorem \ref{hairerlubich}.\\
\noindent{\textbf{Step 2.}} Assumption A2 of Theorem \ref{hairerlubich} holds automatically since in our case the increment function $\Phi$ does not depend on $n$.\\
\noindent{\textbf{Step 3.}} Assumptions \boxed{a2}--\boxed{a3} imply the consistency of order $p$ of the GLM. Indeed, assumption \boxed{a2} implies that the local error of the LMM is ${\cal{O}}(h^{p+1})$ in the sense of \cite[Section III.2, Definition 2.3]{hairernorsettwanner}. But this quantity is just the first component of the vector $Y(x_{n+1})-\hat{Y}_{n+1}$ in \cite[Section III.4, Lemma 4.3]{hairernorsettwanner} describing the local error of the one-step reformulation \eqref{onestepreformulation} of the LMM;  the remaining components of $Y(x_{n+1})-\hat{Y}_{n+1}$ are $0$. This means that the local error $d_{n+1}$ ($n\in\mathbb{N}$) of the  GLM in the sense of \cite[Section III.8, Definition 8.9]{hairernorsettwanner} is also ${\cal{O}}(h^{p+1})$. Since $d_0={\cal{O}}(h^{p})$  due to assumption \boxed{a3}, we get, by using \cite[Section III.8, Lemma 8.11]{hairernorsettwanner}, that the consistency order of the GLM is $p$. (In (8.15) of \cite[Section III.8, Lemma 8.11]{hairernorsettwanner}, the relation with the spectral projector $E\delta_p(x)=0$ now clearly holds, since $\delta_p(x)=0$ due to $d_{n+1}={\cal{O}}(h^{p+1})$.)\\
\noindent{\textbf{Step 4.}} We finally show that assumption A1 of Theorem \ref{hairerlubich} follows from \boxed{a4}. It is known  \cite[Chapter 7]{matrixmathematics} that the eigenvalues of a Kronecker product $S=A\otimes I$ are the product of the eigenvalues of its components, hence (ii) of A1 of Theorem \ref{hairerlubich} is verified.
To check (i) of A1 of Theorem \ref{hairerlubich}, we use the characterization of power boundedness given by Theorem \ref{zdzislav}. 
The minimal polynomial divides the characteristic polynomial, so  \boxed{a4} imples (ia). We also know from \boxed{a4} that $1$ is a simple zero of the characteristic polynomial of $A$, but from the above properties of the Kronecker product we see that its multiplicity in the \emph{characteristic} polynomial of $S$ is $m$. However, $1$ is also a simple zero of the minimal polynomial of $A$. Hence, Lemma \ref{kroneckerminimal} with $M:=A$  implies property (ib).  
\end{proof}
\begin{rem}
    The characteristic polynomial of $A$ is a non-zero constant multiple of  the first characteristic polynomial of the LMM. Hence, due to consistency of the LMM \eqref{multistepdefn}, 1 is always an eigenvalue of $A$.  
\end{rem}
\begin{rem} Power boundedness of the matrix $S=A\otimes I$ in the above proof of Theorem \ref{convergencethm}  also follows from \cite[Section III.4, Lemma 4.4]{hairernorsettwanner}, since now $A$ is power bounded. Hence, our Lemma \ref{kroneckerminimal} can be considered as an alternative (more algebraic) proof of this fact.
\end{rem}

Many LMMs satisfy the assumptions \boxed{a2} and \boxed{a4} of Theorem \ref{convergencethm}. In particular, all AB and AM methods satisfy \boxed{a4} (since, for any of these methods, the only non-zero eigenvalue of the corresponding matrix \eqref{Amatrixdef} is $1$ with algebraic multiplicity $1$), and one directly checks that each BDF$k$ method also satisfies \boxed{a4}. Therefore, we have the following convergence result. 
\begin{cor}\label{maincorollary}
According to Theorem \ref{convergencethm}, the convergence of the sequences obtained by applying  any of the LMMs
\begin{itemize}
    \item AB$k$ (with $k\ge 2$ steps),
    \item AM$k$ (with $k \ge 2$ steps), or 
    \item BDF$k$ (with $2\le k\le 6$ steps)
\end{itemize}   
to problem \eqref{ODE} can be accelerated by using  LMM-$\ell$GREs.
\end{cor}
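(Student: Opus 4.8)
The plan is to obtain the corollary as an immediate consequence of Theorem \ref{convergencethm}: it suffices to check that, for each of the three listed families of linear multistep methods, the hypotheses \boxed{a1}--\boxed{a4} are in force. Assumption \boxed{a1} is a standing smoothness requirement on the right-hand side $f$ of \eqref{ODE} and is not method-dependent, and \boxed{a3} is an assumption on how the $k$ starting values are produced (in practice they are generated by a one-step starter of order at least $p$), so neither poses an obstruction particular to any of the three families. Thus the real work is to verify \boxed{a2} (order $p\ge 1$) and \boxed{a4} (the spectral/zero-stability condition on the companion matrix $A$ of \eqref{Amatrixdef}).

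For \boxed{a2} I would simply recall the classical order counts: an AB$k$ method has order $k$, a ($k$-step) AM$k$ method has order $k+1$, and a BDF$k$ method has order $k$; in every case $p\ge 1$. For \boxed{a4}, the key observation is that the eigenvalues of the matrix $A$ in \eqref{Amatrixdef} are exactly the roots of the first characteristic polynomial $\rho(\zeta)=\sum_{j=0}^k\alpha_j\zeta^j$, since $A$ is (up to the normalization by $1/\alpha_k$) the companion matrix of $\rho$. For both Adams families one has $\rho(\zeta)=\zeta^{k}-\zeta^{k-1}=\zeta^{k-1}(\zeta-1)$, whose roots are $0$ with multiplicity $k-1$ together with the simple root $1$; hence every eigenvalue of $A$ lies in the closed unit disk, $1$ is the only eigenvalue of modulus $1$, and it has algebraic multiplicity $1$, which is precisely \boxed{a4}.

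For BDF$k$ with $2\le k\le 6$ I would invoke the classical fact (due to Dahlquist; see, e.g., \cite[Section III.3]{hairernorsettwanner}) that these methods are zero-stable and satisfy the strict root condition: all roots of the associated $\rho$ lie in the closed unit disk, $\zeta=1$ is a simple root, and it is the only root on the unit circle. This is exactly the point where the restriction $k\le 6$ enters (for $k\ge 7$ the BDF method is not zero-stable and \boxed{a4} fails), and it can alternatively be confirmed by a direct computation of the roots of $\rho$ for each $k\in\{2,\dots,6\}$.

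Once \boxed{a1}--\boxed{a4} have been verified for all three families, Theorem \ref{convergencethm} applies verbatim and gives the stated acceleration: the extrapolated sequences \eqref{egyszeresRE}--\eqref{haromszorosRE} converge to $y(t^*)$ with orders $p+1$, $p+2$, $p+3$, respectively. I do not anticipate a genuine obstacle; the only mildly nontrivial ingredient is the strict root condition for BDF$k$, which is standard and whose failure for $k\ge 7$ is precisely what explains the range restriction in the statement.
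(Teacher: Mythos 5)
Your proposal is correct and follows essentially the same route as the paper: the paper's justification (given in the paragraph preceding the corollary) likewise reduces everything to checking \boxed{a4}, noting that for the Adams families the only nonzero eigenvalue of the companion matrix \eqref{Amatrixdef} is the simple eigenvalue $1$, and that each BDF$k$ with $2\le k\le 6$ satisfies \boxed{a4} by direct verification. Your write-up merely makes explicit the standard order counts for \boxed{a2} and the role of the root condition in the restriction $k\le 6$, which the paper leaves implicit.
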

These combined methods  will be referred to as AB$k$-RGRE, AM$k$-RGRE, and BDF$k$-RGRE  in general, and AB$k$-$\ell$GRE, AM$k$-$\ell$GRE, and BDF$k$-$\ell$GRE  in particular (where $\ell$ is the number of RE applications appearing in $r_n^{[\ell]}(h)$).

\section{Linear stability analysis}\label{seclinstab}

In this section, let LMM denote any of the base methods AB$k$ ($k\ge 2$), AM$k$ ($k\ge 2$), or BDF$k$ ($2\le k\le 6$), and let $\ell\in\mathbb{N}^+$ denote the number of RE applications.  
 The region of absolute stability of the base method is denoted by $\mathcal{S}_\text{LMM}$, while that  of the combined LMM-${\ell}$GRE method will be denoted by ${\mathcal{S}}_\text{RGRE}^{[\ell]}$. Let us apply the LMM-${\ell}$GRE method to the scalar linear test equation 
$y'(t)=\lambda y(t)$, $y(t_0)=y_0$
 with some $h>0$ and $\lambda\in\mathbb{C}$. Similarly to \cite[Section 2.3]{feketeloczi}, we define ${\mathcal{S}}_\text{RGRE}^{[\ell]}\subset\mathbb{C}$ as follows. 
 
 \begin{dfn}{Suppose we are given a positive integer step-number sequence $1=n_1<n_2<\ldots<n_{\ell+1}$ that determines the RGRE. The region of absolute stability ${\mathcal{S}}_\emph{RGRE}^{[\ell]}\subset\mathbb{C}$ of the combined LMM-${\ell}$GRE method is defined as the set of numbers $\mu:=h\lambda$ for which the sequence $n\mapsto r_n^{[\ell]}(h)$ is bounded for any choice of the starting values of any of its component sequences
$n\mapsto y_{n}\left({h}/{n_j}\right)$ ($j=1,\ldots,\ell+1$), but excluding the values of $\mu$ for which any of the leading coefficients $\alpha_k-\left({\mu}/{n_j}\right)\beta_k$ ($j=1,\ldots,\ell+1$) vanishes.}
\end{dfn}
 The proofs of the lemmas of \cite[Section 2.3]{feketeloczi} can be extended in a straightforward way to the present situation, so we only state the results.

\begin{lem}\label{stabregionlemma} We have the inclusions 
 \[
\bigcap_{j=1}^{\ell+1} \big({n_j\,\mathcal{S}}_\text{\emph{LMM}}\big)\subseteq
{\mathcal{S}}_\text{\emph{RGRE}}^{[\ell]}\subseteq {\mathcal{S}}_\text{\emph{LMM}}.
 \]
\end{lem}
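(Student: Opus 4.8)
The plan is to prove the two inclusions separately, extending the $\ell=1$ arguments of \cite[Section 2.3]{feketeloczi} essentially verbatim. Throughout I would fix $h>0$, write $\mu=h\lambda$, and record two elementary facts. First, the component sequence $n\mapsto y_n(h/2^j)$ obtained by running the base LMM on the grid of mesh width $h/2^j$ applied to the test equation is the solution of a homogeneous linear recurrence whose stability polynomial is the one associated with the parameter value $\mu/2^j$; consequently $\mu\in 2^j\mathcal{S}_\text{LMM}$ is equivalent to $\mu/2^j\in\mathcal{S}_\text{LMM}$. Second, $r_n^{[\ell]}(h)=\sum_{j=0}^{\ell}c_j\,y_{2^jn}(h/2^j)$ for fixed coefficients $c_j=c_j(p)$ that one reads off from \eqref{egyszeresRE}--\eqref{haromszorosRE}, all of which are nonzero; in particular the outermost coefficient $c_0=(-1)^{\ell}/\prod_{i=0}^{\ell-1}(2^{p+i}-1)$ never vanishes. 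I would also match up, once and for all, the sets of $\mu$ excluded from the various regions because some leading coefficient $\alpha_k-(\mu/2^j)\beta_k$ vanishes, so that in what follows ``$\mu\in\mathcal{S}_\text{RGRE}^{[\ell]}$'' entails $\alpha_k-(\mu/2^j)\beta_k\ne 0$ for every $j\in\{0,1,\dots,\ell\}$.

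For the first inclusion I take $\mu\in\bigcap_{j=0}^{\ell}(2^j\mathcal{S}_\text{LMM})$, so that $\mu/2^j\in\mathcal{S}_\text{LMM}$ for each $j$. By the very definition of the region of absolute stability of an LMM, for each $j$ the sequence $m\mapsto y_m(h/2^j)$ is bounded for every choice of its $k$ starting values (and no leading coefficient vanishes). A subsequence of a bounded sequence is bounded, hence each $n\mapsto y_{2^jn}(h/2^j)$ is bounded, and therefore so is the finite linear combination $n\mapsto r_n^{[\ell]}(h)$; since the starting values of every component sequence were arbitrary, $\mu\in\mathcal{S}_\text{RGRE}^{[\ell]}$.

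For the reverse inclusion the point is that the starting values of the $\ell+1$ component sequences are prescribed independently of one another. Given $\mu\in\mathcal{S}_\text{RGRE}^{[\ell]}$, I choose arbitrary starting values $y_0(h),\dots,y_{k-1}(h)$ for the coarsest ($j=0$) component and set all $k$ starting values of every finer component ($j=1,\dots,\ell$) equal to $0$. Since $\alpha_k-(\mu/2^j)\beta_k\ne 0$, the homogeneous recurrence governing the $j$-th component has an invertible leading coefficient, so zero initial data propagates to give $y_m(h/2^j)\equiv 0$ for all $j\ge 1$; hence $r_n^{[\ell]}(h)=c_0\,y_n(h)$ for all $n$. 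Boundedness of $r_n^{[\ell]}(h)$ (which holds because $\mu\in\mathcal{S}_\text{RGRE}^{[\ell]}$) together with $c_0\ne 0$ yields boundedness of $n\mapsto y_n(h)$, and since its starting values were arbitrary and $\alpha_k-\mu\beta_k\ne 0$, this is precisely $\mu\in\mathcal{S}_\text{LMM}$.

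I do not anticipate a genuine obstacle: the whole argument is a soft combination of ``a subsequence of a bounded sequence is bounded'' and ``a linear recurrence with invertible leading coefficient and zero initial data is identically zero''. The only places that need care are the bookkeeping of the excluded $\mu$-values, so that the leading-coefficient exclusions correspond correctly across the three regions, and the verification that the finer-grid component sequences really do vanish identically once their starting data is set to zero, which is exactly where the nonvanishing of $\alpha_k-(\mu/2^j)\beta_k$ is used. It is worth remarking, as in the $\ell=1$ case of \cite{feketeloczi}, that the left-hand inclusion is in general strict, so the lemma brackets $\mathcal{S}_\text{RGRE}^{[\ell]}$ rather than determining it exactly.
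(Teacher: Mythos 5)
Your proof is correct and is essentially the ``straightforward extension'' of the $\ell=1$ arguments of \cite[Section 2.3]{feketeloczi} that the paper invokes without writing out: boundedness of each component sequence (together with the subsequence observation) gives the left inclusion, and zeroing the starting data of the finer-grid components so that $r_n^{[\ell]}(h)=c_0\,y_n(h)$ with $c_0\ne 0$ gives the right one. Your explicit bookkeeping of the excluded leading-coefficient values is also the natural reading of the definition of ${\mathcal{S}}_\text{RGRE}^{[\ell]}$, whose stated index range $j=1,\ldots,\ell$ presumably should include $j=0$ as well.
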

\begin{lem}
For any $2\le k\le 6$ and $\ell\in\mathbb{N}^+$, the  $\text{BDF}k$-$\ell$GRE method has the same $A(\alpha)$-stability angle as that of the underlying $\text{BDF}k$-method.
\end{lem}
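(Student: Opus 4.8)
The plan is to read off the equality of the two $A(\alpha)$-stability angles directly from the chain of inclusions in Lemma~\ref{stabregionlemma}, using only the elementary fact that the open sector of half-angle $\alpha$ about the negative real axis is invariant under multiplication by a positive real number. For $\alpha\in(0,\pi/2]$ introduce $\Sigma_\alpha:=\{\mu\in\mathbb{C}\setminus\{0\}:|\arg(-\mu)|<\alpha\}$, and recall that the $A(\alpha)$-stability angle of a method with region of absolute stability $\mathcal{S}$ is $\alpha^\star(\mathcal{S}):=\sup\{\alpha\in(0,\pi/2]:\Sigma_\alpha\subseteq\mathcal{S}\}$. Abbreviating $\alpha^\star_{\text{LMM}}:=\alpha^\star(\mathcal{S}_{\text{LMM}})$ for the base BDF$k$ method ($2\le k\le 6$) and $\alpha^\star_{\text{RGRE}}:=\alpha^\star(\mathcal{S}_{\text{RGRE}}^{[\ell]})$ for the combined method, the goal is $\alpha^\star_{\text{RGRE}}=\alpha^\star_{\text{LMM}}$.

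First I would establish $\alpha^\star_{\text{RGRE}}\le\alpha^\star_{\text{LMM}}$: this is immediate from the right-hand inclusion $\mathcal{S}_{\text{RGRE}}^{[\ell]}\subseteq\mathcal{S}_{\text{LMM}}$ in Lemma~\ref{stabregionlemma}, since $\Sigma_\alpha\subseteq\mathcal{S}_{\text{RGRE}}^{[\ell]}$ then forces $\Sigma_\alpha\subseteq\mathcal{S}_{\text{LMM}}$, so the supremum defining $\alpha^\star_{\text{RGRE}}$ does not exceed the one defining $\alpha^\star_{\text{LMM}}$. For the reverse inequality, fix any $\alpha<\alpha^\star_{\text{LMM}}$, so that $\Sigma_\alpha\subseteq\mathcal{S}_{\text{LMM}}$. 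Because $2^j\Sigma_\alpha=\Sigma_\alpha$ for every $j\in\mathbb{N}$, we obtain $\Sigma_\alpha=2^j\Sigma_\alpha\subseteq 2^j\mathcal{S}_{\text{LMM}}$ for $j=0,1,\ldots,\ell$, hence $\Sigma_\alpha\subseteq\bigcap_{j=0}^{\ell}\big(2^j\mathcal{S}_{\text{LMM}}\big)\subseteq\mathcal{S}_{\text{RGRE}}^{[\ell]}$ by the left-hand inclusion of Lemma~\ref{stabregionlemma}. Taking the supremum over such $\alpha$ gives $\alpha^\star_{\text{RGRE}}\ge\alpha^\star_{\text{LMM}}$, and the two inequalities together yield the claim. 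The same computation, once the remark below on the excluded points is taken into account, also shows that the whole open left half-plane is contained in $\mathcal{S}_{\text{RGRE}}^{[\ell]}$ whenever it is contained in $\mathcal{S}_{\text{LMM}}$, so that, e.g., BDF$2$-$\ell$GRE is genuinely $A$-stable.

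The step that needs actual care is the role of the points \emph{excluded} in the definition of $\mathcal{S}_{\text{RGRE}}^{[\ell]}$, namely the $\mu$ for which some leading coefficient $\alpha_k-(\mu/2^j)\beta_k$ vanishes: one must check that none of these lies in any $\Sigma_\alpha$ (nor in the open left half-plane), otherwise the inclusion $\Sigma_\alpha\subseteq\mathcal{S}_{\text{RGRE}}^{[\ell]}$ used above could break. For a BDF$k$ method, $\beta_k\ne0$ and $\alpha_k/\beta_k>0$ — with the usual normalization $\beta_k=1$ one has $\alpha_k=\sum_{i=1}^{k}1/i>0$ — so the excluded values are $\mu=2^j\alpha_k/\beta_k>0$; they all lie on the positive real axis, hence outside every sector $\Sigma_\alpha$ with $\alpha\in(0,\pi/2]$ and outside $\{\operatorname{Re}\mu<0\}$. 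Thus the exclusion removes nothing that is needed, and Lemma~\ref{stabregionlemma} can be applied exactly as stated.

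I do not anticipate a real obstacle: Lemma~\ref{stabregionlemma} is already in hand, and the scaling identity $2^j\Sigma_\alpha=\Sigma_\alpha$ is trivial. The only genuinely delicate point is the bookkeeping of the previous paragraph — verifying that for each BDF$k$ with $2\le k\le6$ the ratio $\alpha_k/\beta_k$ is positive, so that the excluded leading-coefficient values sit harmlessly on the positive real axis — which is a short, direct check.
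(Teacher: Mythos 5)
Your argument is correct and is essentially the proof the paper intends: the paper omits it, stating only that the lemmas of \cite[Section 2.3]{feketeloczi} extend straightforwardly, and that extension is precisely your combination of the two inclusions in Lemma \ref{stabregionlemma} with the invariance of the sector $\Sigma_\alpha$ under multiplication by $2^j$. Your additional check that the excluded points $\mu=2^j\alpha_k/\beta_k$ lie on the positive real axis is a sensible (if, given Lemma \ref{stabregionlemma} as stated, technically redundant) piece of care, and your observation that the same reasoning preserves $A$-stability for BDF2 matches the paper's remarks in the Conclusions.
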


\begin{lem}\label{convexlemma} Assume that  ${\mathcal{S}}_\text{\emph{LMM}}$ is convex. Then ${\mathcal{S}}_\text{\emph{RGRE}}^{[\ell]}={\mathcal{S}}_\text{\emph{LMM}}$.
\end{lem}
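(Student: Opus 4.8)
The plan is to combine the sandwich inclusion of Lemma~\ref{stabregionlemma} with the elementary observation that scaling a convex set containing the origin by factors $2^j\ge 1$ only enlarges it. Recall that every region of absolute stability $\mathcal{S}_\text{LMM}$ of a (convergent, hence consistent) LMM contains a neighbourhood of $0$ on the negative real axis, and in fact $0\in\overline{\mathcal{S}_\text{LMM}}$; more importantly for us, $\mathcal{S}_\text{LMM}$ is a subset of the left half-plane for the methods under consideration and, being defined by a boundedness condition on the test equation, it contains $0$ in its closure. The key geometric claim is: if $S\subseteq\mathbb{C}$ is convex and $0\in\overline S$, then $S\subseteq aS$ for every $a\ge 1$.

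First I would establish this geometric claim. Take $z\in S$; I want $z\in aS$, i.e.\ $z/a\in S$. Since $0\in\overline S$ and $z\in S$, for every $\varepsilon>0$ there is a point $w_\varepsilon\in S$ with $|w_\varepsilon|<\varepsilon$; by convexity the segment $[w_\varepsilon,z]\subseteq S$, and $z/a = \tfrac1a z + (1-\tfrac1a)\cdot 0$ lies within distance $(1-\tfrac1a)\varepsilon$ of the point $\tfrac1a z + (1-\tfrac1a)w_\varepsilon\in S$. Letting $\varepsilon\to 0$ shows $z/a\in\overline S$. To get $z/a\in S$ itself (not just its closure), I would instead argue on the open interior or invoke that $\mathcal{S}_\text{LMM}$ for the relevant methods is a closed set (or handle the boundary separately): since $z/a$ is a proper convex combination of the interior-accessible origin and $z$, a standard fact about convex sets gives that $z/a$ is in the relative interior whenever $z$ is in $S$ and $0$ is in $\overline S$, hence $z/a\in S$. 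Applying this with $a=2^j$ for $j=0,1,\ldots,\ell$ yields $\mathcal{S}_\text{LMM}\subseteq 2^j\mathcal{S}_\text{LMM}$ for each such $j$, and therefore
\[
\mathcal{S}_\text{LMM}=\bigcap_{j=0}^{\ell}\big(2^j\mathcal{S}_\text{LMM}\big).
\]

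Then I would simply feed this into Lemma~\ref{stabregionlemma}: the left-hand side of the sandwich becomes $\mathcal{S}_\text{LMM}$, the right-hand side is already $\mathcal{S}_\text{LMM}$, so both inclusions collapse to equalities and ${\mathcal{S}}_\text{RGRE}^{[\ell]}=\mathcal{S}_\text{LMM}$, which is the assertion. (As an immediate corollary, this covers the classical $A$-stable base methods whose stability region is the closed left half-plane, such as the trapezoidal rule, or any $A$-stable LMM in the list, explaining the parenthetical ``$A$-stability is preserved'' remark for the BDF2 case once convexity is checked there.)

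The main obstacle is the boundary subtlety in the geometric claim: the naive convexity argument only gives $z/a\in\overline S$, whereas the statement of Lemma~\ref{convexlemma} asserts set equality. I would resolve this by pinning down exactly which version of $\mathcal{S}_\text{LMM}$ is in force here. If $\mathcal{S}_\text{LMM}$ is taken to be \emph{open} (strictly bounded orbits, $|R(\mu)|<1$ type condition), then $z\in S$ gives $z$ in the open convex set and the midpoint-type argument puts $z/a$ in the interior, done cleanly. If it is \emph{closed}, one uses that a point lying strictly between an interior point and any point of a closed convex set is interior, together with $0$ being in the closure — but one must first know $\mathcal{S}_\text{LMM}$ has nonempty interior, which holds for all convergent LMMs. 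A careful statement handling the case $0\in\partial S$ (so $S$ could a priori have empty interior, e.g.\ degenerate) would note that $\mathcal{S}_\text{LMM}$ always contains an open real interval $(-\eta,0)$ by consistency, ruling this out. Everything else is routine.
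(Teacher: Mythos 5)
Your overall strategy is exactly the one the paper (via the deferred proof in \cite{feketeloczi}) intends: feed $\mathcal{S}_\text{LMM}\subseteq 2^j\mathcal{S}_\text{LMM}$ into the sandwich of Lemma \ref{stabregionlemma} so that both inclusions collapse. However, the geometric step as you state it is not correct. The claim ``$S$ convex and $0\in\overline S$ imply $S\subseteq aS$ for $a\ge 1$'' is false in general: take $S=\{(x,y):x>0,\ y>0\}\cup\{(x,0):x\ge 1\}$, which is convex with $0\in\overline S$, yet $z=(1,0)\in S$ while $z/2=(1/2,0)\notin S$. Your proposed repair via relative interiors runs into the same problem: the standard fact you appeal to requires one endpoint of the segment to lie in the relative interior, whereas here $z$ is an arbitrary (possibly boundary) point of $S$ and $0$ is only assumed to be in $\overline S$; in the counterexample above $z/a$ indeed fails to be in $S$. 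Your alternative fallbacks (assuming $\mathcal{S}_\text{LMM}$ open, or closed) amount to changing the definition of the stability region rather than proving the lemma for the definition in force.

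The clean resolution, which you circle around but never state, is that $0$ belongs to $\mathcal{S}_\text{LMM}$ itself, not merely to its closure: at $\mu=0$ the recursion for the test equation reduces to $\sum_j\alpha_j y_{n+j}=0$, and assumption \boxed{a4} (zero-stability, i.e.\ the root condition for $\rho$) guarantees that all its solutions are bounded for every choice of starting values. Once $0\in\mathcal{S}_\text{LMM}$, convexity gives immediately that $z/a=\tfrac1a z+\bigl(1-\tfrac1a\bigr)\cdot 0\in\mathcal{S}_\text{LMM}$ for every $z\in\mathcal{S}_\text{LMM}$ and $a\ge 1$, hence $\mathcal{S}_\text{LMM}\subseteq 2^j\mathcal{S}_\text{LMM}$ for all $j\ge 0$, the intersection in Lemma \ref{stabregionlemma} equals $\mathcal{S}_\text{LMM}$ (its $j=0$ term already being $\mathcal{S}_\text{LMM}$), and the sandwich closes. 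With this one observation substituted for your closure argument, the proof is complete and coincides with the intended one; without it, the key inclusion is unproven.
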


\begin{rem} In \cite{feketeloczi}, we have shown that ${\mathcal{S}}_\text{\emph{LMM}}$ is  convex, for example, for the AB2 and AM2 methods, but not convex for the AB3 method.
\end{rem}

\section{Numerical tests}\label{secnumerics}

In the context of Richardson extrapolation, taking into account standard implementation practices, we consider the step-number sequence $\left(n_1,n_2,\ldots,n_{\ell+1}\right)=(1,2,\ldots,2^{\ell+1})$ and verify the corresponding expected order of convergence of LMM-RGREs on three benchmark problems. 

We have chosen a Dahlquist test problem
\begin{align}
    y'(t)&=-5y(t)\label{eq:Dahlquist}
\end{align}
for $t\in[0,1]$ with initial condition $y(0)=1$, a Lotka--Volterra system
\begin{align}
    y'_1(t)&=0.1y_1(t)-0.3y_1(t)y_2(t),\label{eq:LotkaVolterra1}\\ 
    y'_2(t)&=0.5(y_1(t)-1)y_2(t) \label{eq:LotkaVolterra2}
\end{align}
for $t\in[0,62]$ with initial condition $y(0)=(1,1)^\top$, and a mildly stiff {van der Pol} equation 
\begin{align}
    y'_1(t)&=y_2(t), \label{eq:vanderPol1}\\ 
    y'_2(t)&=2(1-y_1^2(t))y_2(t)-y_1(t) \label{eq:vanderPol2}
\end{align}
for $t\in[0,20]$ with initial condition $y(0)=(2,0)^\top$.

As base LMMs, we considered the $2^\text{nd}$- and $3^\text{rd}$-order AB, AM, and BDF methods. For starting methods, we chose the $2^\text{nd}$- and $3^\text{rd}$-order Ralston methods, having minimum error bounds \cite{ralston1962}. As it is usual, the AM methods were implemented in predictor-corrector style. For the nonlinear algebraic equations arising in connection with implicit LMMs, we use MATLAB's \texttt{fsolve} command. When the goal is to achieve high convergence order or when we apply RE several times, one should change to a multi-precision environment due to the double-precision limitation of MATLAB and to the limitations of \texttt{fsolve}. The fine-grid solutions obtained by a $6^\text{th}$-order Runge--Kutta method with $2^{16}$ grid points are used as a reference solution to measure the global error in maximum norm and to estimate the order of convergence of LMM-RGREs \cite[Appendix A]{leveque}. 
Tables \ref{tab:Dahlquist_Lotka_Volterra_RRE_slope_4}, \ref{tab:Dahlquist_RRE_slope_5} and \ref{tab:Lotka_Volterra_RRE_slope_5}, and Figure \ref{fig:4_5_van_der_Pol} illustrate the expected $4^\text{th}$ or $5^\text{th}$-order convergence for these LMM-RGREs.


\begin{table}[ht!]
\begin{center}
\begin{tabular}{cccc||cccc}
\hline
$\text{AB}2$-2GRE & $\text{AM}2$-2GRE & $\text{BDF}2$-2GRE & $N$ & $\text{AB}2$-2GRE & $\text{AM}2$-2GRE & $\text{BDF}2$-2GRE & $N$\\ 
 \hline
 3.9437 & 4.0923 & 4.4613 & 128 & 3.9707  & 4.3757 & 3.8136 & 1024\\
 3.9833 & 4.0469 & 4.2500 & 256 & 3.9883  & 4.2391 & 3.9265 & 2048\\
 3.9942 & 4.0237 & 4.1333 & 512 &3.9951  & 4.1376 & 3.9679 & 4096\\
 3.9977 & 4.0119 & 4.0342 & 1024 &3.9983  & 4.0928 & 3.9908 & 8192\\
 \hline
 \multicolumn{4}{c}{Dahlquist problem \eqref{eq:Dahlquist}} & \multicolumn{4}{c}{Lotka--Volterra system \eqref{eq:LotkaVolterra1}-\eqref{eq:LotkaVolterra2}} 
\end{tabular}

\caption{The estimated order of convergence for different $4^\text{th}$-order LMM-RGREs, where $N$ denotes the number of grid points of the finer mesh}
\label{tab:Dahlquist_Lotka_Volterra_RRE_slope_4}
\end{center}
\end{table}

\begin{table}[ht!]
\begin{center}
\begin{tabular}{ccccccc}
\hline
$\text{AB}3$-2GRE & $\text{AM}3$-2GRE & $\text{BDF}3$-2GRE & 
$\text{AB}2$-3GRE & $\text{AM}2$-3GRE & $\text{BDF}2$-3GRE & $N$ \\
 \hline
 5.0121 & 5.0319 & 5.2081 & 4.8145 & 5.0564 & 5.1449 & 512 \\
 \hline
\end{tabular}
\caption{The estimated order of convergence for the Dahlquist problem \eqref{eq:Dahlquist} for different $5^\text{th}$-order LMM-RGREs, where $N$ denotes the number of grid points of the finer mesh}\label{tab:Dahlquist_RRE_slope_5}
\end{center}
\end{table}   

\begin{table}[ht!]
\begin{center}
\begin{tabular}{ ccccccc}
\hline
$\text{AB}3$-2GRE & $\text{AM}3$-2GRE & $\text{BDF}3$-2GRE & $\text{AB}2$-3GRE & $\text{AM}2$-3GRE & $\text{BDF}2$-3GRE & $N$\\
 \hline
 4.4933  & 4.8252 & 5.4540 & 6.1243 & 4.2922 & 4.0779 & 512\\
 5.0397  & 4.9769 & 5.4343 & 5.4060 & 4.8533 & 4.7738 & 1024 \\
 5.0747  & 4.9996 & 5.2692 & 5.2996 & 4.9615 & 4.9942 & 2048\\
 4.9983  & 4.9509 & 5.2278 & 5.2431 & 4.9864 & 4.9914 & 4096\\
 \hline
\end{tabular}
\caption{The estimated order of convergence for the Lotka--Volterra system \eqref{eq:LotkaVolterra1}-\eqref{eq:LotkaVolterra2} for different $5^\text{th}$-order LMM-RGREs, where $N$ denotes the number of grid points of the finer mesh}
\label{tab:Lotka_Volterra_RRE_slope_5}
\end{center}
\end{table}


\begin{figure}[ht!]
\includegraphics[width=1.0\textwidth]{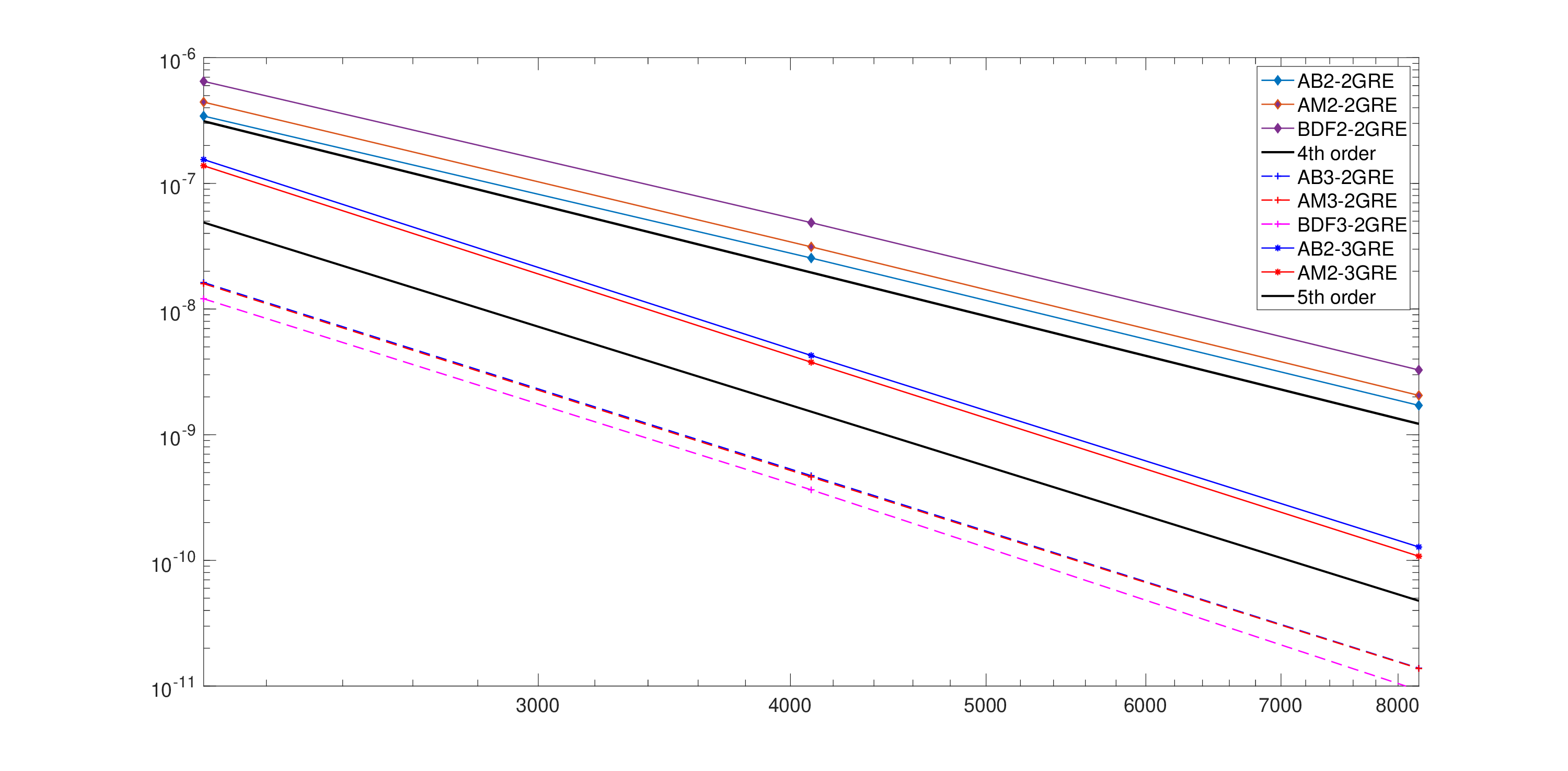}
\caption{Number of grid points versus the global error in maximum norm for the van der Pol equation \eqref{eq:vanderPol1}-\eqref{eq:vanderPol2}
for $4^\text{th}$-order and $5^\text{th}$-order LMM-RGREs}\label{fig:4_5_van_der_Pol}
\end{figure}

It is natural to ask whether the LMM-RGREs can be computationally more efficient than the corresponding underlying methods. To test this, we have chosen the mildly stiff van der Pol system \eqref{eq:vanderPol1}-\eqref{eq:vanderPol2} being the most challenging problem in this section. As shown in Table \ref{tab:van_der_Pol_running_times}, the LMM-RGREs  reach the given tolerance value in significantly less time than their underlying LMM counterparts.

\begin{table}[ht!]
\begin{center}
\begin{tabular}{c|cc|cc}
\hline
Running time (sec) & $\text{AB}2$& $\text{AB}2$-2GRE & $\text{AM}2$& $\text{AM}2$-2GRE \\ 
 \hline
 Minimum & 0.0305 & 0.0030 &  0.0337 & 0.0072 \\
 Average & 0.0340 & 0.0037 & 0.0365 & 0.0082 \\
 \hline
\end{tabular}

\caption{The running times measured in seconds for the van der Pol system \eqref{eq:vanderPol1}-\eqref{eq:vanderPol2} for different $2^\text{nd}$-order LMMs and the corresponding $4^\text{th}$-order LMM-RGREs with tolerance $10^{-6}$, where the minimum and average values were computed after $100$ simulations. In this case the reference solution has been computed using $2^{20}$ grid points.}
\label{tab:van_der_Pol_running_times}
\end{center}
\end{table}


\newpage

\section{Conclusions}

Based on the results of Sections \ref{secconv}--\ref{secnumerics}, we can draw the following conclusions.

\begin{itemize}
    \item  To implement a  LMM-RGRE method,  existing LMM codes can directly be used without any modification.
    

    \item  Although our recent work \cite{feketeloczi} can be considered as the $\ell=1$ special case of Theorem \ref{convergencethm} of this paper, still, the assumptions of Theorem \ref{convergencethm} on the closeness of the initial values are relaxed: instead of ${\cal{O}}(h^{p+1})$-closeness, it is sufficient to assume ${\cal{O}}(h^{p})$-closeness for the starting values of the LMM.

     \item The proof of Theorem \ref{convergencethm} is based on \cite[Section III.9]{hairernorsettwanner}. Since they establish the existence of 
    asymptotic expansions of the global error for (strictly stable) general linear methods (GLMs), one can clearly apply RGRE in that context as well, and accelerate the convergence of numerical sequences generated by such GLMs.

    \item Regarding linear stability, when the underlying LMM is a BDF method, for example, LMM-RGREs preserve the  $A(\alpha)$-stability angles. In particular, when $\ell=2$ and the base method is a BDF2 method, we obtain a $4^\text{th}$-order $A$-stable method. Another example is the BDF5 method with $\ell=2$, resulting  in a $7^\text{th}$-order method with $A(\alpha)$-stability angle $\approx 51.839^\circ$ (see also \cite[Table 1]{feketeloczi} for the $\ell=1$ case).

\item The computational cost of a LMM-RGRE increases for larger values of the RE applications. In general, due to the computations on finer grids, applying GRE $\ell$ times requires approximately $n_1+n_2+\ldots +n_{\ell+1}$ as much computation as the underlying LMM. However, this is compensated by the higher convergence order $p+\ell$. In particular, the LMM-RGREs shown in Table \ref{tab:van_der_Pol_running_times} are several times faster than their underlying LMM counterparts when the goal is to reach a given error tolerance.
    
\end{itemize}
 
The application of \textit{local} RE to LMMs is the subject of an ongoing study. We also plan to
systematically investigate the computational efficiency of various RE implementations.


    


\section*{Acknowledgement}


The authors would like to thank the anonymous reviewers for their careful comments that improved the structure and extended the scope of the paper.

I.~Fekete was supported by the J\'anos Bolyai Research Scholarship of the Hungarian Academy of Sciences, and also supported by the \'UNKP-22-5 New National Excellence Program of the Ministry for Culture and Innovation from the source of the National Research, Development and Innovation Fund. 

The research was supported by the Ministry of Innovation and Technology NRDI Office within the framework of the Artificial Intelligence National Laboratory Program RRF-2.3.1-21-2022-00004. 

\section*{Declaration}
\textbf{Conflict of interest} The authors declare that there is no conflict of interest.


\footnotesize

\end{document}